\newtheorem{theorem}{Theorem}
\newtheorem{corollary}{Corollary}
\newtheorem{proposition}{Proposition}
\numberwithin{definition}{section} \numberwithin{theorem}{section}
\numberwithin{lemma}{section}\numberwithin{corollary}{section}
\numberwithin{equation}{section} \numberwithin{example}{section}
\numberwithin{proposition}{section} \numberwithin{remark}{section}
\begin{document}

\begin{center}

{\Large On the Robustness of the Integrable Trajectories of the Control Systems with Limited Control Resources}

\vspace{5mm}

\vspace{3mm}

Nesir Huseyin$^1$, Anar Huseyin$^2$,  Khalik G. Guseinov$^3$

\vspace{3mm}
{\small $^1$Cumhuriyet University, Faculty of Education, Department of Mathematics and \\ Science Education, 58140 Sivas, TURKEY

e-mail: nhuseyin@cumhuriyet.edu.tr

\vspace{2mm}

$^2$Cumhuriyet University, Faculty of Science, Department of Statistics and Computer Sciences \\ 58140 Sivas, TURKEY

e-mail: ahuseyin@cumhuriyet.edu.tr

\vspace{2mm}

$^3$Eskisehir Technical  University, Faculty of Science, Department of Mathematics \\ 26470 Eskisehir, TURKEY

e-mail: kguseynov@eskisehir.edu.tr}

\end{center}

\vspace{5mm}

\textbf{Abstract.}
The control system described by Urysohn type integral equation is considered where the system is nonlinear with respect to the phase vector and is affine with respect to the control vector. The control functions are chosen from the closed ball of the space $L_q\left(\Omega;\mathbb{R}^m\right),$ $q>1,$ with radius $r$ and centered at the origin. The trajectory of the system is defined as $p$-integrable multivariable function from the space $L_p\left(\Omega;\mathbb{R}^n\right),$ $\frac{1}{q}+\frac{1}{p}=1,$   satisfying the system's equation almost everywhere. It is shown that the system's trajectories are robust with respect to the remaining control resource. Applying this result it is proved that every trajectory can be approximated by the trajectory obtained by full consumption of the total control resource.

\vspace{3mm}
\textbf{Keywords:} Nonlinear control system, integral equation, integral constraint, integrable trajectory, robustness.

\vspace{3mm}
\textbf{2020 Mathematics Subject Classification:}  93C23, 93C35, 45G15

\section{Introduction}

The control systems described by integral equations is one of the important chapters of the control systems theory. The integral models undoubtedly have some advantages over differential ones, since the integral models allows to use continuous, and even integrable functions for the systems trajectory. It should be also underlined that the solutions concept for initial and boundary value problems for differential equations can be reduced to solution notion for appropriate integral equation. Note that the theory of the linear integral equations is considered one of the origins of the contemporary functional analysis (see, e.g. \cite{cor} - \cite{sha}).

The integral constraint on the control functions is inevitable, if the control resource is exhausted by consumption such as energy, fuel, finance and etc. The integral constraint on the control functions differs from geometric constraint, since the integrally constraint does not guarantee the geometric boundedness. Therefore, the control systems with integral constraints on the control functions have special behaviour and investigation of the systems requires different approaches (see, e.g. \cite{con} - \cite{sub}  and references therein).

The paper is organised as follows. In section 2 the basic conditions and preliminary propositions, which are used in following arguments, are given. In Section 3 it is proved that every trajectory is robust with respect to the remaining control resource (Theorem \ref{teo4.1}). It is also proved the set of trajectories coincides with the closure of the set of trajectories obtained by full consumption of the total control resource (Theorem \ref{teo4.2}).

\section{Preliminaries}

Consider the control system described by the  Urysohn type integral equation
\begin{eqnarray} \label{ue1}
\displaystyle x(\xi)=f\left(\xi,x\left(\xi\right)\right)+\lambda
\int_{\Omega}
\left[K_1\left(\xi,s,x\left(s\right)\right)+K_2\left(\xi,s,x\left(s\right)
\right) u\left(s\right)\right] ds
\end{eqnarray}
where  $x\in \mathbb{R}^n$ is the state vector, $u\in
\mathbb{R}^m$ is the control vector,  $\lambda \in \mathbb{R}^1,$ $\xi \in \Omega,$ $\Omega \subset \mathbb{R}^k$ is a compact set. Without loss of generality it will be assumed that $\lambda \geq 0.$

For given $q>1$ and $r > 0$ we denote
\begin{eqnarray*}
U_{q,r}=\left\{u(\cdot) \in L_q\big(\Omega;\mathbb{R}^m\big):
\left\| u(\cdot) \right\|_q \leq r \right\} ,
\end{eqnarray*}
where $L_q\big(\Omega;\mathbb{R}^m\big)$ is the space of Lebesgue measurable functions $u(\cdot):\Omega\rightarrow \mathbb{R}^m$ such that $\left\|u(\cdot)\right\|_q <+\infty,$ $\displaystyle \left\|u(\cdot)\right\|_q =\left(\int_{\Omega} \left\| u(s)\right\|^q ds\right)^{\frac{1}{q}},$
$\left\| \cdot \right\|$ denotes the Euclidean norm.

$U_{q,r}$ is called the set of admissible control functions and every  $u(\cdot) \in U_{q,r}$ is said to be an admissible control function.

For given $q\in (1,+\infty)$ let  $p\in (1,+\infty)$ be such that $\displaystyle \frac{1}{p}+\frac{1}{q}=1.$
It is assumed that the following conditions are satisfied.

\vspace{2mm}

\textbf{2.A.} The function  $f(\cdot,x ):\Omega\rightarrow
\mathbb{R}^{n}$ is Lebesgue measurable for every fixed $x\in \mathbb{R}^n$, $f(\cdot,0) \in L_{p}\left( \Omega ;\mathbb{R}^n\right)$ and there exists $\gamma_0(\cdot) \in L_{\infty}\left( \Omega ;\left[0,+\infty \right) \right)$ such that for almost all (a.a.) $\xi \in \Omega$ the inequality
\begin{eqnarray*}
\left\Vert f(\xi,x_{1})-f(\xi,x_{2})\right\Vert \leq \gamma_{0}(\xi) \left\Vert
x_{1}-x_{2}\right\Vert
\end{eqnarray*} is satisfied for every $x_1\in \mathbb{R}^n$ and $x_2\in \mathbb{R}^n$, where $L_{\infty}\big(\Omega;\mathbb{R}^{n_*}\big)$ is the space of Lebesgue measurable functions $g(\cdot):\Omega\rightarrow \mathbb{R}^{n_*}$ such that
$\left\|g(\cdot)\right\|_{\infty} <+\infty,$ $\displaystyle \left\|g(\cdot)\right\|_{\infty} =\inf \{ c>0:  \left\| g(s)\right\| \leq c \ \mbox{for a.a.} \ s \in \Omega \};$

\vspace{2mm}

\textbf{2.B.} The function  $K_1(\cdot,\cdot,x ): \Omega  \times \Omega \rightarrow
\mathbb{R}^{n}$ is Lebesgue measurable for every fixed $x\in \mathbb{R}^n$, $K_1(\cdot,\cdot,0) \in L_{p} \left(\Omega \times \Omega ;\mathbb{R}^n\right)$ and there exists $\gamma_1(\cdot,\cdot): \Omega \times \Omega \rightarrow \left[0,+\infty \right)$ such that
\begin{eqnarray*}
\int_{\Omega}\left( \int_{\Omega} \gamma_1(\xi,s)^q \, ds \right)^{\frac{p}{q}} d\xi < +\infty
\end{eqnarray*}
and
for a.a. $(\xi,s) \in  \Omega \times \Omega$ the inequality
\begin{eqnarray*}
\left\Vert K_1(\xi,s, x_{1})-K_1(\xi,s,x_{2})\right\Vert \leq \gamma_{1}(\xi,s) \left\Vert
x_{1}-x_{2}\right\Vert
\end{eqnarray*} is satisfied for every $x_1\in \mathbb{R}^n$ and $x_2\in \mathbb{R}^n$;

\vspace{2mm}

\textbf{2.C.} The function  $K_2(\cdot,\cdot,x ): \Omega \times \Omega \rightarrow
\mathbb{R}^{n\times m}$ is Lebesgue measurable for every fixed $x\in \mathbb{R}^n$, $K_2(\cdot,\cdot,0) \in L_{p}\left( \Omega \times \Omega ;\mathbb{R}^{n\times m}\right)$ and there exists $\gamma_2(\cdot,\cdot) \in L_{\infty}\left( \Omega \times \Omega;\left[0,+\infty \right) \right)$ such that for a.a. $(\xi,s) \in \Omega \times \Omega$ the inequality
\begin{eqnarray*}
\left\Vert K_2(\xi,s, x_{1})-K_2(\xi,s,x_{2})\right\Vert \leq  \gamma_{2}(\xi,s) \left\Vert x_{1}-x_{2}\right\Vert
\end{eqnarray*} is satisfied for every $x_1\in \mathbb{R}^n$ and $x_2\in \mathbb{R}^n$;

\vspace{2mm}

\textbf{2.D.} The inequality
\begin{eqnarray*}
\displaystyle 6^{p-1}\left[ \kappa_{0}^{p} +\lambda^{p} \kappa_{1}^{p} +\lambda^{p} r^{p} \kappa_{2}^{p} \mu(\Omega)\right]  < 1
\end{eqnarray*}
is satisfied, where $\mu(\Omega)$ denotes the Lebesgue measure of the set $\Omega$,
\begin{eqnarray}\label{kap0}
\kappa_0= \left\| \gamma_0(\cdot) \right\|_{\infty}, \ \ \kappa_2 =\left\|\gamma_2(\cdot,\cdot)\right\|_{\infty},
\end{eqnarray}
\begin{eqnarray}\label{kap1}
\displaystyle \kappa_1=\left( \int_{\Omega} \left(\int_{\Omega}\gamma_1(\xi,s)^{q} ds\right)^{\frac{p}{q}} \, d\xi \right)^{\frac{1}{p}}.
\end{eqnarray}

Denote
\begin{eqnarray}\label{alfa0}
\alpha_0= \left(\int_{\Omega}\left\|f(\xi, 0)\right\|^{p}  d\xi \right)^{\frac{1}{p}}, \ \ \alpha_i = \left(\int_{\Omega} \int_{\Omega}\left\|K_i(\xi, s,0)\right\|^{p} ds \, d\xi \right)^{\frac{1}{p}}, \ i=1,2
\end{eqnarray}
\begin{eqnarray}\label{ellam}
L_*(\lambda) =\displaystyle 6^{p-1}\left[ \kappa_{0}^{p} +\lambda^{p} \kappa_{1}^{p} +\lambda^{p} r^{p} \kappa_{2}^{p}\mu(\Omega)\right].
\end{eqnarray}

Condition 2.D implies that $L_*(\lambda)<1.$  Let us set
\begin{eqnarray*}\label{emlam*}
T_*(\lambda) = \displaystyle 6^{p-1}\left[ \alpha_{0}^{p} +\lambda^{p} \alpha_{1}^{p} \mu(\Omega)^{\frac{p}{q}} +\lambda^{p} \alpha_2^{p}r^{p} \right],
\end{eqnarray*}
\begin{eqnarray}\label{beta*}
\beta_*=\left[\frac{T_*(\lambda)}{1-L_*(\lambda)}\right]^{\frac{1}{p}}
\end{eqnarray} where $\alpha_0,$ $\alpha_1$ and $\alpha_2$ are defined by (\ref{alfa0}).

Let $u(\cdot)\in U_{q,r}$ be a given admissible control function. A function $x(\cdot) \in L_{p}\left( \Omega ; \mathbb{R}^n\right)$ satisfying the integral equation (\ref{ue1}) for a.a. $\xi \in \Omega$ is said to be a trajectory of the
system (\ref{ue1}) generated by the admissible control function
$u(\cdot)\in U_{q,r} \ .$ The set of trajectories of the system (\ref{ue1})
generated by all admissible control functions $u(\cdot)\in U_{q,r}$ is denoted by
$\mathbf{X}_{p,r}$ and is called the set of trajectories of the system (\ref{ue1}).

Now we will formulate some propositions the proofs of which are given in Huseyin (2020), and will be used in following arguments.
\begin{proposition} \label{prop2.1} \cite{hus} Every admissible control function $u(\cdot)\in U_{q,r}$ generates unique trajectory of the system (\ref{ue1}).
\end{proposition}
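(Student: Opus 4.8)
The plan is to reformulate ``$x(\cdot)$ is a trajectory generated by $u(\cdot)$'' as a fixed point equation in the Banach space $L_p(\Omega;\mathbb{R}^n)$ and to apply the Banach contraction mapping principle, with condition 2.D supplying exactly the contraction constant. Fix $u(\cdot)\in U_{q,r}$ and introduce the operator $\Phi_u$ acting on $L_p(\Omega;\mathbb{R}^n)$ by
\begin{eqnarray*}
(\Phi_u x)(\xi)=f\left(\xi,x(\xi)\right)+\lambda\int_{\Omega}\left[K_1\left(\xi,s,x(s)\right)+K_2\left(\xi,s,x(s)\right)u(s)\right]ds .
\end{eqnarray*}
By the very definition of a trajectory, a function $x(\cdot)\in L_p(\Omega;\mathbb{R}^n)$ is a trajectory of (\ref{ue1}) generated by $u(\cdot)$ if and only if $x(\cdot)=\Phi_u x(\cdot)$. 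Hence it suffices to prove that $\Phi_u$ maps $L_p(\Omega;\mathbb{R}^n)$ into itself and is a contraction there.

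For the first part I would argue that $\xi\mapsto(\Phi_u x)(\xi)$ is measurable: conditions 2.A--2.C say that $f$, $K_1$, $K_2$ are Carathéodory-type functions (measurable in the first variable(s), Lipschitz in $x$), so the superposition maps $\xi\mapsto f(\xi,x(\xi))$ and $(\xi,s)\mapsto K_i(\xi,s,x(s))$ are measurable, and the parametrized integrals are measurable by Fubini's theorem once integrability is checked. For the norm estimate I would use the triangle inequalities $\|f(\xi,x(\xi))\|\le\|f(\xi,0)\|+\gamma_0(\xi)\|x(\xi)\|$ and its analogues for $K_1,K_2$, then apply Hölder's inequality in $s$: $\int_\Omega\gamma_1(\xi,s)\|x(s)\|\,ds\le\left(\int_\Omega\gamma_1(\xi,s)^q ds\right)^{1/q}\|x(\cdot)\|_p$, $\int_\Omega\|K_1(\xi,s,0)\|\,ds\le\mu(\Omega)^{1/q}\left(\int_\Omega\|K_1(\xi,s,0)\|^p ds\right)^{1/p}$, and $\int_\Omega\left(\|K_2(\xi,s,0)\|+\gamma_2(\xi,s)\|x(s)\|\right)\|u(s)\|\,ds\le\left(\int_\Omega\|K_2(\xi,s,0)\|^p ds\right)^{1/p}\|u(\cdot)\|_q+\kappa_2\|x(\cdot)\|_p\|u(\cdot)\|_q$. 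Writing $\|(\Phi_u x)(\xi)\|$ as a sum of the six nonnegative terms arising in these bounds, using the elementary inequality $(a_1+\cdots+a_6)^p\le 6^{p-1}(a_1^p+\cdots+a_6^p)$, integrating in $\xi$, and using $\|u(\cdot)\|_q\le r$ together with the notation (\ref{kap0})--(\ref{alfa0}), one obtains
\begin{eqnarray*}
\|\Phi_u x(\cdot)\|_p^p\le T_*(\lambda)+L_*(\lambda)\|x(\cdot)\|_p^p .
\end{eqnarray*}
In particular $\Phi_u x(\cdot)\in L_p(\Omega;\mathbb{R}^n)$, and since $L_*(\lambda)<1$ by 2.D and (\ref{ellam}), the choice (\ref{beta*}) shows that $\Phi_u$ maps the closed ball of radius $\beta_*$ into itself, which also yields the a priori bound for the trajectory.

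For the contraction property, given $x_1(\cdot),x_2(\cdot)\in L_p(\Omega;\mathbb{R}^n)$ the constant terms cancel and the Lipschitz hypotheses 2.A--2.C give
\begin{eqnarray*}
\|(\Phi_u x_1)(\xi)-(\Phi_u x_2)(\xi)\|\le\gamma_0(\xi)\|x_1(\xi)-x_2(\xi)\|+\lambda\int_\Omega\gamma_1(\xi,s)\|x_1(s)-x_2(s)\|\,ds+\lambda\int_\Omega\gamma_2(\xi,s)\|x_1(s)-x_2(s)\|\,\|u(s)\|\,ds .
\end{eqnarray*}
Applying the same Hölder estimates, the inequality $(a_1+a_2+a_3)^p\le 3^{p-1}(a_1^p+a_2^p+a_3^p)\le 6^{p-1}(a_1^p+a_2^p+a_3^p)$, and $\|u(\cdot)\|_q\le r$, one gets
\begin{eqnarray*}
\|\Phi_u x_1(\cdot)-\Phi_u x_2(\cdot)\|_p^p\le L_*(\lambda)\,\|x_1(\cdot)-x_2(\cdot)\|_p^p .
\end{eqnarray*}
Since this holds for all $x_1(\cdot),x_2(\cdot)$, $\Phi_u$ is a contraction of the complete metric space $L_p(\Omega;\mathbb{R}^n)$ into itself with constant $\left(L_*(\lambda)\right)^{1/p}<1$. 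The Banach fixed point theorem then delivers a unique $x(\cdot)\in L_p(\Omega;\mathbb{R}^n)$ with $x(\cdot)=\Phi_u x(\cdot)$, i.e. a unique trajectory generated by $u(\cdot)$.

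I expect the main obstacle to be the bookkeeping in the well-definedness step: justifying measurability of the Nemytskii-type compositions and of the parametrized integrals, verifying that $s\mapsto K_1(\xi,s,x(s))$ and $s\mapsto K_2(\xi,s,x(s))u(s)$ are integrable for a.a. $\xi$, and arranging the Hölder and Minkowski estimates so that precisely the constants $T_*(\lambda)$ and $L_*(\lambda)$ of (\ref{ellam})--(\ref{beta*}) appear. Once these estimates are in place, the contraction property and the invocation of the fixed point theorem are routine.
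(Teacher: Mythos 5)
Your proposal is correct, and it is essentially the argument the paper relies on: the paper itself gives no proof of Proposition \ref{prop2.1} but defers to the reference \cite{hus}, and the way Conditions 2.A--2.D and the quantities $L_*(\lambda)$, $T_*(\lambda)$, $\beta_*$ are set up (in particular the factor $6^{p-1}$ and the requirement $L_*(\lambda)<1$) shows they are tailored exactly for your contraction-mapping argument on $L_p\left(\Omega;\mathbb{R}^n\right)$, with your bound $\|\Phi_u x(\cdot)\|_p^p\le T_*(\lambda)+L_*(\lambda)\|x(\cdot)\|_p^p$ also yielding the a priori estimate of Proposition \ref{prop2.2}.
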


\begin{proposition}\label{prop2.2} \cite{hus} For each  $x(\cdot)\in \mathbf{X}_{p,r}$ the inequality
\begin{eqnarray*}\left\|x(\cdot)\right\|_{p} \leq \beta_*
\end{eqnarray*}
is satisfied where $\beta_*$ is defined by (\ref{beta*}).
\end{proposition}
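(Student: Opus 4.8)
The plan is to fix an admissible control $u(\cdot)\in U_{q,r}$ together with the trajectory $x(\cdot)\in L_p(\Omega;\mathbb{R}^n)$ it generates (unique by Proposition \ref{prop2.1}), to estimate $\|x(\xi)\|$ pointwise from the integral equation (\ref{ue1}), and then to integrate the $p$-th power over $\Omega$. Starting from the triangle inequality applied to the right-hand side of (\ref{ue1}), I would write $f(\xi,x(\xi))$, $K_1(\xi,s,x(s))$ and $K_2(\xi,s,x(s))u(s)$ each as the value at $0$ plus the Lipschitz increment furnished by conditions 2.A, 2.B, 2.C. This presents $\|x(\xi)\|$ as a sum of six nonnegative quantities: $\|f(\xi,0)\|$, $\gamma_0(\xi)\|x(\xi)\|$, $\lambda\int_{\Omega}\|K_1(\xi,s,0)\|\,ds$, $\lambda\int_{\Omega}\gamma_1(\xi,s)\|x(s)\|\,ds$, $\lambda\int_{\Omega}\|K_2(\xi,s,0)\|\,\|u(s)\|\,ds$, and $\lambda\int_{\Omega}\gamma_2(\xi,s)\|x(s)\|\,\|u(s)\|\,ds$. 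This is precisely where the factor $6^{p-1}$ enters, through the elementary convexity bound $(a_1+\cdots+a_6)^p\le 6^{p-1}(a_1^p+\cdots+a_6^p)$.

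Next I would raise each of the six terms to the power $p$, integrate over $\xi\in\Omega$, and identify the constants. The ``zero-kernel'' terms are handled by Hölder's inequality in the $s$-variable: for the $K_1$ term one pairs $\|K_1(\xi,s,0)\|$ against the constant $1$ with exponents $p$ and $q$, obtaining the constant $\alpha_1^p\mu(\Omega)^{p/q}$ after integration in $\xi$; for the $K_2$ term one pairs $\|K_2(\xi,s,0)\|$ against $\|u(s)\|$ with exponents $p$ and $q$ and uses $\|u(\cdot)\|_q\le r$, obtaining $\alpha_2^p r^p$; and $\int_{\Omega}\|f(\xi,0)\|^p\,d\xi=\alpha_0^p$ directly. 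Multiplying by $6^{p-1}$, these three pieces assemble into $T_*(\lambda)$. The three Lipschitz terms are treated in the same spirit: $\int_{\Omega}\gamma_0(\xi)^p\|x(\xi)\|^p\,d\xi\le\kappa_0^p\|x(\cdot)\|_p^p$; for the $\gamma_1$ term, Hölder in $s$ with exponents $q$ and $p$ gives $\big(\int_{\Omega}\gamma_1(\xi,s)^q\,ds\big)^{1/q}\|x(\cdot)\|_p$, and raising to the power $p$ and integrating in $\xi$ gives $\kappa_1^p\|x(\cdot)\|_p^p$; for the $\gamma_2$ term one first uses $\gamma_2(\xi,s)\le\kappa_2$ a.e.\ and then Hölder with exponents $p$ and $q$ together with $\|u(\cdot)\|_q\le r$ to get the bound $\kappa_2 r\|x(\cdot)\|_p$ uniformly in $\xi$, whence integration in $\xi$ yields $\kappa_2^p r^p\mu(\Omega)\|x(\cdot)\|_p^p$. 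Multiplying by $6^{p-1}$, these three pieces assemble into $L_*(\lambda)\|x(\cdot)\|_p^p$.

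Combining, one reaches $\|x(\cdot)\|_p^p\le T_*(\lambda)+L_*(\lambda)\|x(\cdot)\|_p^p$. Since $x(\cdot)\in L_p(\Omega;\mathbb{R}^n)$ by the very definition of a trajectory, $\|x(\cdot)\|_p$ is finite, and since Condition 2.D gives $L_*(\lambda)<1$, the term $L_*(\lambda)\|x(\cdot)\|_p^p$ may be absorbed into the left-hand side, yielding $\|x(\cdot)\|_p^p\le T_*(\lambda)/(1-L_*(\lambda))=\beta_*^p$, i.e.\ $\|x(\cdot)\|_p\le\beta_*$; as $u(\cdot)$ and hence $x(\cdot)$ were arbitrary, the estimate holds on all of $\mathbf{X}_{p,r}$. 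The only real difficulty here is bookkeeping rather than ideas: keeping the six applications of Hölder straight (some pairings use exponents $(p,q)$, others $(q,p)$) so that the weights $\mu(\Omega)^{p/q}$ and $\mu(\Omega)$ land on the correct terms, and checking at the outset that every integral in play is finite so that the manipulations — and especially the concluding absorption step — are legitimate.
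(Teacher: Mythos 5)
Your argument is correct, and although the paper itself only cites \cite{hus} for this proposition rather than proving it, your six-term decomposition with the convexity bound $(a_1+\cdots+a_6)^p\le 6^{p-1}(a_1^p+\cdots+a_6^p)$ and the subsequent H\"{o}lder estimates reproduce exactly the constants $T_*(\lambda)$, $L_*(\lambda)$ and the absorption step encoded in the definition of $\beta_*$ in (\ref{beta*}). This is evidently the intended proof, so no further comment is needed.
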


\begin{proposition}\label{prop2.3} \cite{hus} The set of trajectories $\mathbf{X}_{p,r}$ is a compact and path-connected subset of the space $L_{p}\left( \Omega; \mathbb{R}^n\right).$
\end{proposition}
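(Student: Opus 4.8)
The plan is to realize $\mathbf{X}_{p,r}$ as the image of the weakly compact convex set $U_{q,r}$ under the solution operator, and to show that this operator is continuous from the weak topology of $L_q\left(\Omega;\mathbb{R}^m\right)$ into the norm topology of $L_p\left(\Omega;\mathbb{R}^n\right)$. Define $\Phi\colon U_{q,r}\to L_p\left(\Omega;\mathbb{R}^n\right)$ by letting $\Phi(u(\cdot))$ be the unique trajectory generated by $u(\cdot)$, which is legitimate by Proposition \ref{prop2.1}; then $\mathbf{X}_{p,r}=\Phi\left(U_{q,r}\right)$. Since $1<q<+\infty$, the space $L_q\left(\Omega;\mathbb{R}^m\right)$ is reflexive, so the closed ball $U_{q,r}$ is bounded, hence (by reflexivity and the Eberlein--Smulian theorem) relatively weakly sequentially compact, and, being convex and norm-closed, it is weakly closed, so it is weakly sequentially compact. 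Granting the key claim below, $\mathbf{X}_{p,r}$ is then the image of a weakly sequentially compact set under a weak-to-norm sequentially continuous map: any sequence $(x_j(\cdot))$ in $\mathbf{X}_{p,r}$, say $x_j(\cdot)=\Phi(u_j(\cdot))$ with $u_j(\cdot)\in U_{q,r}$, has a subsequence with $u_{j_k}(\cdot)\rightharpoonup u(\cdot)\in U_{q,r}$, whence $x_{j_k}(\cdot)\to\Phi(u(\cdot))\in\mathbf{X}_{p,r}$ in $L_p$, so $\mathbf{X}_{p,r}$ is sequentially compact, hence compact. For path-connectedness, note that norm convergence implies weak convergence, so $\Phi$ is also norm-to-norm continuous; given $u_0(\cdot),u_1(\cdot)\in U_{q,r}$ the segment $t\mapsto u_t(\cdot)=(1-t)u_0(\cdot)+tu_1(\cdot)$ lies in $U_{q,r}$ by convexity and depends norm-continuously on $t$, so $t\mapsto\Phi(u_t(\cdot))$ is a continuous path in $\mathbf{X}_{p,r}$ joining $\Phi(u_0(\cdot))$ to $\Phi(u_1(\cdot))$.

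\textbf{Key claim.} If $u_j(\cdot)\rightharpoonup u(\cdot)$ weakly in $L_q\left(\Omega;\mathbb{R}^m\right)$ with $u_j(\cdot),u(\cdot)\in U_{q,r}$, then $\Phi(u_j(\cdot))\to\Phi(u(\cdot))$ in $L_p\left(\Omega;\mathbb{R}^n\right)$. To prove it, set $x_j(\cdot)=\Phi(u_j(\cdot))$, $x(\cdot)=\Phi(u(\cdot))$; by Proposition \ref{prop2.2} both have $L_p$-norm at most $\beta_*$. Subtracting the two instances of (\ref{ue1}) and using $K_2(\xi,s,x_j(s))u_j(s)-K_2(\xi,s,x(s))u(s)=\left[K_2(\xi,s,x_j(s))-K_2(\xi,s,x(s))\right]u_j(s)+K_2(\xi,s,x(s))\left[u_j(s)-u(s)\right]$, one gets for a.a. $\xi\in\Omega$
\begin{eqnarray*}
x_j(\xi)-x(\xi)&=&\left[f(\xi,x_j(\xi))-f(\xi,x(\xi))\right]+\lambda\int_{\Omega}\left[K_1(\xi,s,x_j(s))-K_1(\xi,s,x(s))\right]ds\\
&&+\lambda\int_{\Omega}\left[K_2(\xi,s,x_j(s))-K_2(\xi,s,x(s))\right]u_j(s)\,ds+\eta_j(\xi),
\end{eqnarray*}
where $\eta_j(\xi)=\lambda\int_{\Omega}K_2(\xi,s,x(s))\left[u_j(s)-u(s)\right]ds$. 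Applying the Lipschitz bounds 2.A--2.C, the constraint $\|u_j(\cdot)\|_q\le r$, Hölder's inequality and $(a_1+a_2+a_3+a_4)^p\le 6^{p-1}\sum_{i=1}^4 a_i^p$ — precisely the estimates behind Proposition \ref{prop2.1} — one obtains, with $L_*(\lambda)$ as in (\ref{ellam}),
\begin{eqnarray*}
\|x_j(\cdot)-x(\cdot)\|_p^p\le L_*(\lambda)\,\|x_j(\cdot)-x(\cdot)\|_p^p+6^{p-1}\|\eta_j(\cdot)\|_p^p .
\end{eqnarray*}
Since $L_*(\lambda)<1$ by condition 2.D, this gives $\left(1-L_*(\lambda)\right)\|x_j(\cdot)-x(\cdot)\|_p^p\le 6^{p-1}\|\eta_j(\cdot)\|_p^p$, so the claim is reduced to showing $\|\eta_j(\cdot)\|_p\to 0$.

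For a.a.\ fixed $\xi\in\Omega$ the function $s\mapsto K_2(\xi,s,x(s))$ lies in $L_p\left(\Omega;\mathbb{R}^{n\times m}\right)$, because $\|K_2(\xi,s,x(s))\|\le\|K_2(\xi,s,0)\|+\kappa_2\|x(s)\|$ with $s\mapsto K_2(\xi,s,0)$ in $L_p$ for a.a.\ $\xi$ by (\ref{alfa0}) and $x(\cdot)\in L_p$; as each row of it is a bounded linear functional on $L_q\left(\Omega;\mathbb{R}^m\right)$, the weak convergence $u_j(\cdot)\rightharpoonup u(\cdot)$ forces $\eta_j(\xi)\to 0$ for a.a.\ $\xi$. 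Moreover, by Hölder's inequality $\|\eta_j(\xi)\|\le\lambda\|u_j(\cdot)-u(\cdot)\|_q\,h(\xi)\le 2r\lambda\,h(\xi)$ with $h(\xi)=\left(\int_{\Omega}\|K_2(\xi,s,x(s))\|^p ds\right)^{1/p}$, and $\int_{\Omega}h(\xi)^p\,d\xi<+\infty$ by the same bound together with (\ref{alfa0}) and $x(\cdot)\in L_p$. Hence $\|\eta_j(\cdot)\|^p\le(2r\lambda)^p h(\cdot)^p\in L_1(\Omega)$ and $\|\eta_j(\xi)\|\to 0$ a.e., so the dominated convergence theorem yields $\|\eta_j(\cdot)\|_p\to 0$, completing the proof of the claim and hence of the proposition.

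I expect the main obstacle to be exactly this weak-to-norm continuity: one must exploit that the control enters (\ref{ue1}) only affinely and through the fixed kernel $K_2$, so that the single problematic term $\eta_j$ is a genuine weak-convergence term that is annihilated by dominated convergence, while every remaining term is Lipschitz in the state and is therefore absorbed into the left-hand side thanks to the strict contraction inequality $L_*(\lambda)<1$ supplied by condition 2.D.
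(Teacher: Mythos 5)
Your argument is correct: weak sequential compactness of the ball $U_{q,r}$ in the reflexive space $L_q\left(\Omega;\mathbb{R}^m\right)$, weak-to-norm sequential continuity of the control-to-trajectory map (with the single weak-convergence term $\eta_j$ killed by dominated convergence and everything else absorbed via $L_*(\lambda)<1$), and convexity of $U_{q,r}$ for path-connectedness. The paper does not reproduce a proof of this proposition --- it is quoted from \cite{hus} --- but the argument there is of exactly this type, so your proposal matches the intended route.
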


\section{Robustness of the Trajectories}

The robustness of the system with respect to some parameter which takes values on a given set, usually means that a variation of the parameter within a given set generates an insignificant deviation of the phase state.
The robustness of the trajectory of the control system with respect to the remaining control resource means that no matter how much the remaining management resource is, applying special method for complete consumption of the remaining control resource, it is possible to obtain a small variation of the  original trajectory.

Let
\begin{eqnarray}\label{c*}
c_*=2\lambda r\left[\frac{6^{p-1}}{1-L_*(\lambda)}\right]^{\frac{1}{p}}
\end{eqnarray} where $L_*(\lambda)$ is defined by (\ref{ellam}).

Denote $g_*(\xi,s)=K_2(\xi,s,0),$ $(\xi,s)\in \Omega \times \Omega.$ According to the condition 2.C we have
$g_*(\cdot,\cdot) \in L_{p}\left( \Omega \times \Omega ;\mathbb{R}^{n\times m}\right).$ It is known (see, \cite{kan}, p.318) that for given $\varepsilon >0$ there exists a continuous function $g_{\varepsilon}(\cdot,\cdot):\Omega \times \Omega \rightarrow \mathbb{R}^{n\times m}$ such that
\begin{eqnarray}\label{eq79}
\int_{\Omega}\int_{\Omega} \left\|g_*(\xi,s)-g_{\varepsilon}(\xi,s) \right\|^{p}  ds \, d\xi  \leq \frac{\varepsilon^{p}}{3c_{*}^{p}}.
\end{eqnarray} where $c_*$ is defined by (\ref{c*}).

Now, let us set
\begin{eqnarray}\label{eq80}
M(\varepsilon)=\max \left\{ \left\|g_{\varepsilon}(\xi,s) \right\|: (\xi,s)  \in  \Omega \times \Omega \right\}.
\end{eqnarray}

From Proposition \ref{prop2.3}, i.e. from the compactness of the set of trajectories $\mathbf{X}_{p,r}\subset L_{p}(\Omega; \mathbb{R}^n)$  it follows the validity of the following proposition.
\begin{proposition}\label{prop4.1} For every $\varepsilon >0$ there exists $\displaystyle \delta_*(\varepsilon) \in \left(0, \frac{\varepsilon^{p}}{3M(\varepsilon)^{p}c_*^{p}\mu(\Omega)}\right)$ such that for each Lebesgue measurable set $\Omega_* \subset \Omega$ such that
\begin{eqnarray*}
\displaystyle  \mu(\Omega_*) \leq \delta_*(\varepsilon),
\end{eqnarray*}
 the inequality
\begin{eqnarray*}
\displaystyle  \int_{\Omega_*} \left\|x(s)\right\|^{p}ds \leq \frac{\varepsilon^{p}}{3\kappa_{2}^{p}c_{*}^{p}\mu(\Omega)}
\end{eqnarray*}
is satisfied for every $x(\cdot) \in \mathbf{X}_{p,r}$ where $c_*$ is defined by (\ref{c*}).
\end{proposition}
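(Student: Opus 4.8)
The plan is to extract the desired uniform absolute-continuity property of the integrals $\int_{\Omega_*}\|x(s)\|^p\,ds$ directly from the compactness of $\mathbf{X}_{p,r}$ in $L_p(\Omega;\mathbb{R}^n)$, which was established in Proposition \ref{prop2.3}. The key classical fact I would invoke is that a single function $y(\cdot)\in L_p(\Omega;\mathbb{R}^n)$ has absolutely continuous integral, i.e. for every $\eta>0$ there is $\delta>0$ with $\int_{\Omega_*}\|y(s)\|^p\,ds\le\eta$ whenever $\mu(\Omega_*)\le\delta$; this follows, for instance, from the dominated convergence theorem applied to $\|y(\cdot)\|^p\,\mathbf 1_{\{\|y(\cdot)\|>N\}}$ together with the estimate $\int_{\Omega_*}\|y\|^p\le N^p\mu(\Omega_*)+\int_{\{\|y\|>N\}}\|y\|^p$. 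The task is then to make this $\delta$ uniform over the compact family $\mathbf{X}_{p,r}$.

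First I would fix $\varepsilon>0$ and abbreviate the target bound as $\eta_*=\dfrac{\varepsilon^{p}}{3\kappa_{2}^{p}c_{*}^{p}\mu(\Omega)}$. Since $\mathbf{X}_{p,r}$ is compact in $L_p(\Omega;\mathbb{R}^n)$, it is totally bounded, so I can cover it by finitely many balls of radius $\rho$ (a value to be chosen at the end, proportional to $\eta_*^{1/p}$) centered at trajectories $x_1(\cdot),\dots,x_N(\cdot)\in\mathbf{X}_{p,r}$. For each center $x_j(\cdot)$, the single-function statement above gives a $\delta_j>0$ such that $\mu(\Omega_*)\le\delta_j$ implies $\int_{\Omega_*}\|x_j(s)\|^p\,ds$ is small; set $\delta_0=\min_{1\le j\le N}\delta_j$. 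Now for an arbitrary $x(\cdot)\in\mathbf{X}_{p,r}$, pick $j$ with $\|x(\cdot)-x_j(\cdot)\|_p\le\rho$, and for any $\Omega_*$ with $\mu(\Omega_*)\le\delta_0$ estimate, using the triangle inequality in $L_p(\Omega_*;\mathbb{R}^n)$,
\begin{eqnarray*}
\left(\int_{\Omega_*}\|x(s)\|^{p}\,ds\right)^{\frac1p}
\le\left(\int_{\Omega_*}\|x_j(s)\|^{p}\,ds\right)^{\frac1p}
+\left(\int_{\Omega_*}\|x(s)-x_j(s)\|^{p}\,ds\right)^{\frac1p}
\le\left(\int_{\Omega_*}\|x_j(s)\|^{p}\,ds\right)^{\frac1p}+\rho,
\end{eqnarray*}
so choosing $\rho$ and the $\delta_j$'s so that each of the two terms on the right is at most $\tfrac12\eta_*^{1/p}$ yields $\int_{\Omega_*}\|x(s)\|^p\,ds\le\eta_*$, uniformly in $x(\cdot)$. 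Finally I would shrink $\delta_0$ if necessary so that it also lies below the prescribed upper bound $\dfrac{\varepsilon^{p}}{3M(\varepsilon)^{p}c_*^{p}\mu(\Omega)}$, and take $\delta_*(\varepsilon)$ to be the resulting positive number; the interval in the statement is open and nonempty, so this last adjustment is harmless.

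I expect the only genuinely delicate point to be the bookkeeping that turns "each term $\le\tfrac12\eta_*^{1/p}$" into the exact constant in the statement — in particular making sure the finite set of centers and their associated $\delta_j$ are chosen \emph{after} $\rho$ is fixed, so there is no circularity. Everything else is routine: total boundedness of a compact set, the triangle inequality restricted to $\Omega_*$, and the standard absolute-continuity of the Lebesgue integral of a fixed $L_p$ function. No use is made of the structure of equation (\ref{ue1}) here; the proposition is purely a consequence of Proposition \ref{prop2.3} and the uniform $L_p$-bound $\beta_*$ from Proposition \ref{prop2.2} (the latter is not strictly needed once compactness is in hand, but it reassures us the family is bounded).
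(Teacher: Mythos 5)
Your argument is correct and follows exactly the route the paper intends: the paper offers no written proof of Proposition \ref{prop4.1}, stating only that it ``follows from the compactness of $\mathbf{X}_{p,r}$'' established in Proposition \ref{prop2.3}, and your total-boundedness covering argument combined with the absolute continuity of the integral of each center function is the standard way to supply the missing details. The final adjustment placing $\delta_*(\varepsilon)$ inside the prescribed open interval is also handled correctly.
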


\begin{theorem}\label{teo4.1}
Let $\varepsilon >0$ be a given number,  $x(\cdot)\in \mathbf{X}_{p,r}$ be a trajectory of the system (\ref{ue1}) generated by the admissible control function $u(\cdot)\in U_{q,r},$ $\left\|u(\cdot)\right\|_p =r_0 <r,$ $\Omega_*\subset \Omega$ be Lebesgue measurable set,
\begin{eqnarray}\label{eq71}
v(\xi)=\left\{
\begin{array}{llll}
u(\xi) & \mbox{if} & \xi \in \Omega \setminus \Omega_* \\
u_*(\xi) & \mbox{if} & \xi \in \Omega_*
\end{array}
\right.
\end{eqnarray}
be such that $\left\|v(\cdot)\right\|_{q}=r,$ $z(\cdot)\in \mathbf{X}_{p,r}$ be the trajectory of the system (\ref{ue1})
generated by the admissible control function $v(\cdot)\in U_{q,r}$.  If
\begin{eqnarray}\label{eq72}
\mu(\Omega_*)\leq  \delta_*(\varepsilon)
\end{eqnarray}
then
\begin{eqnarray*}
\left\|x(\cdot)-z(\cdot)\right\|_{p} \leq \varepsilon
\end{eqnarray*} where $\delta_*(\varepsilon)$ is defined in Proposition \ref{prop4.1}.
\end{theorem}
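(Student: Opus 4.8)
The plan is to write the integral equation (\ref{ue1}) for both $x(\cdot)$ and $z(\cdot)$, subtract them, take Euclidean norms, raise the resulting pointwise inequality to the power $p$ and integrate over $\Omega$, and then absorb into the left-hand side the terms carrying a factor $\|x(\cdot)-z(\cdot)\|_p^p$ using condition 2.D, which guarantees $L_*(\lambda)<1$.

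First, since $v(\xi)=u(\xi)$ on $\Omega\setminus\Omega_*$, subtracting (\ref{ue1}) written for $z(\cdot)$ from (\ref{ue1}) written for $x(\cdot)$ and splitting $K_2(\xi,s,x(s))u(s)-K_2(\xi,s,z(s))v(s)=[K_2(\xi,s,x(s))-K_2(\xi,s,z(s))]u(s)+K_2(\xi,s,z(s))[u(s)-v(s)]$ gives, for a.a. $\xi\in\Omega$,
\begin{align*}
x(\xi)-z(\xi)&=\bigl[f(\xi,x(\xi))-f(\xi,z(\xi))\bigr]+\lambda\int_\Omega\bigl[K_1(\xi,s,x(s))-K_1(\xi,s,z(s))\bigr]ds\\
&\quad+\lambda\int_\Omega\bigl[K_2(\xi,s,x(s))-K_2(\xi,s,z(s))\bigr]u(s)\,ds+\lambda\int_{\Omega_*}K_2(\xi,s,z(s))\bigl[u(s)-v(s)\bigr]ds.
\end{align*}
Bounding the first three summands by $\kappa_0\|x(\xi)-z(\xi)\|$, $\lambda\int_\Omega\gamma_1(\xi,s)\|x(s)-z(s)\|ds$ and $\lambda\kappa_2\int_\Omega\|x(s)-z(s)\|\,\|u(s)\|ds$ (conditions 2.A, 2.B, 2.C), and in the last summand using $\|K_2(\xi,s,z(s))\|\le\|g_*(\xi,s)-g_\varepsilon(\xi,s)\|+M(\varepsilon)+\kappa_2\|z(s)\|$ (by (\ref{eq80}) and 2.C), one obtains an estimate of $\|x(\xi)-z(\xi)\|$ by a sum of six nonnegative terms.

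Next I would apply $\left(\sum_{i=1}^6 a_i\right)^p\le 6^{p-1}\sum_{i=1}^6 a_i^p$, integrate over $\xi\in\Omega$, and estimate every term by Hölder's inequality with exponents $p$ and $q$, using $\|u(\cdot)\|_q\le r$ and $\|u(\cdot)-v(\cdot)\|_q\le\|u(\cdot)\|_q+\|v(\cdot)\|_q\le 2r$. By (\ref{kap0})--(\ref{kap1}), the three terms stemming from $f$, $K_1$ and $K_2u$ contribute exactly $6^{p-1}\bigl[\kappa_0^p+\lambda^p\kappa_1^p+\lambda^p r^p\kappa_2^p\mu(\Omega)\bigr]\|x(\cdot)-z(\cdot)\|_p^p=L_*(\lambda)\|x(\cdot)-z(\cdot)\|_p^p$, which moves to the left, leaving
\begin{eqnarray*}
\|x(\cdot)-z(\cdot)\|_p^p\le\frac{6^{p-1}}{1-L_*(\lambda)}\,(A_4+A_5+A_6),
\end{eqnarray*}
where $A_4,A_5,A_6$ are the $\Omega_*$-contributions associated with $\|g_*-g_\varepsilon\|$, $M(\varepsilon)$ and $\kappa_2\|z\|$. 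For $A_4$ I would use (\ref{eq79}); for $A_5$ the bound $\mu(\Omega_*)\le\delta_*(\varepsilon)<\varepsilon^p/(3M(\varepsilon)^p c_*^p\mu(\Omega))$ from Proposition \ref{prop4.1}; and for $A_6$ the estimate $\int_{\Omega_*}\|z(s)\|^p\,ds\le\varepsilon^p/(3\kappa_2^p c_*^p\mu(\Omega))$ furnished by Proposition \ref{prop4.1} (applicable since $z(\cdot)\in\mathbf{X}_{p,r}$ and $\mu(\Omega_*)\le\delta_*(\varepsilon)$). Since $c_*$ in (\ref{c*}) satisfies $c_*^p=2^p\lambda^p r^p 6^{p-1}/(1-L_*(\lambda))$, each of the three resulting quantities equals $\varepsilon^p/3$, whence $\|x(\cdot)-z(\cdot)\|_p^p\le\varepsilon^p$, which is the assertion.

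The computations are routine Hölder estimates; the step requiring care is the bookkeeping of the constants. In particular, one must split $\|K_2(\xi,s,z(s))\|$ into precisely the three pieces controlled, respectively, by the continuous approximant $g_\varepsilon$, by its maximum $M(\varepsilon)$, and by the uniform smallness of trajectories on sets of small measure (Proposition \ref{prop4.1}), so that the definitions of $c_*$, $M(\varepsilon)$ and $\delta_*(\varepsilon)$ combine to produce the bound $\tfrac13\varepsilon^p$ in each of the three remaining terms; the factor $6^{p-1}$ built into condition 2.D is exactly what allows the first three terms to reassemble into $L_*(\lambda)$.
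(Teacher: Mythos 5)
Your proposal is correct and follows essentially the same route as the paper's own proof: the identical splitting $K_2(\xi,s,x(s))u(s)-K_2(\xi,s,z(s))v(s)=[K_2(\xi,s,x(s))-K_2(\xi,s,z(s))]u(s)+K_2(\xi,s,z(s))[u(s)-v(s)]$, the same three-way decomposition of $\left\|K_2(\xi,s,z(s))\right\|$ into the pieces controlled by $\left\|g_*-g_{\varepsilon}\right\|$, $M(\varepsilon)$ and $\kappa_2\left\|z(s)\right\|$, and the same absorption of $L_*(\lambda)\left\|x(\cdot)-z(\cdot)\right\|_p^p$ followed by three contributions of $\varepsilon^p/3$ via (\ref{eq79}), the bound on $\delta_*(\varepsilon)$, and Proposition \ref{prop4.1}. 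The constant bookkeeping you describe matches the paper's computation exactly.
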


\begin{proof}  From (\ref{kap0}), (\ref{eq71}), Conditions 2.A, 2.B, 2.C, inclusion $u(\cdot)\in U_{q,r}$ and H\"{o}lder's inequality it follows that
\begin{eqnarray}\label{eq76}
\displaystyle && \left\|x(\xi)-z(\xi)\right\| \leq  \kappa_0 \left\|x(\xi)-z(\xi)\right\| + \lambda  \left(\int_{\Omega} \gamma_1 (\xi,s)^{q} ds \right)^{\frac{1}{q}}\left\|x(\cdot)-z(\cdot)\right\|_{p} \nonumber \\ && +\lambda \kappa_2 r  \left\|x(\cdot)-z(\cdot)\right\|_{p} + \lambda  \int_{\Omega_*} \left\|K_2(\xi,s,z(s))\right\| \left\| u(s)-v(s) \right\| ds
\end{eqnarray} for a.a. $\xi \in \Omega.$ (\ref{kap0}) and Condition 2.C imply
\begin{eqnarray}\label{eq77}
\int_{\Omega_*} \left\|K_2(\xi,s,z(s))\right\| \left\|u(s)-v(s)\right\| ds &\leq &  \kappa_2 \int_{\Omega_*} \left\|z(s) \right\| \left\|u(s)-v(s)\right\| ds \nonumber \\ &+& \int_{\Omega_*} \left\|K_2(\xi,s,0)\right\|\left\|u(s)-v(s)\right\| ds
\end{eqnarray}
for a.a. $\xi \in \Omega.$

Since $z(\cdot)\in \mathbf{X}_{p,r}$, then inclusions  $u(\cdot)\in U_{q,r}$, $v(\cdot)\in U_{q,r}$, (\ref{eq72}), Proposition \ref{prop4.1} and H\"{o}lder's inequality yield
\begin{eqnarray}\label{eq78}
\int_{\Omega_*} \left\|z(s) \right\| \left\|u(s)-v(s)\right\| ds &\leq & \left(\int_{\Omega_*} \left\|z(s)\right\|^{p}\right)^{\frac{1}{p}}\cdot \left(\int_{\Omega_*} \left\|u(s)-v(s)\right\|^{q} ds\right)^{\frac{1}{q}}
 \nonumber \\ &\leq& 2r \cdot \frac{\varepsilon}{c_* \kappa_2 \left[3\mu(\Omega)\right]^{\frac{1}{p}}} .
\end{eqnarray}
From (\ref{eq80}), inclusions $u(\cdot)\in U_{q,r},$ $v(\cdot)\in U_{q,r}$ and H\"{o}lder's inequality it follows that
\begin{eqnarray}\label{eq81}
&&\displaystyle \int_{\Omega_*} \left\|g_*(\xi,s) \right\| \cdot  \left\|u(s)-v(s)\right\| ds   \leq \int_{\Omega_*} \left\|g_*(\xi,s)-g_{\varepsilon} (\xi,s) \right\| \cdot  \left\|u(s)-v(s)\right\| ds \nonumber \\
\displaystyle  && \quad + \int_{\Omega_*} \left\|g_{\varepsilon}(\xi,s) \right\| \cdot  \left\|u(s)-v(s)\right\| ds \nonumber \\
\displaystyle && \leq  2r \left(\int_{\Omega} \left\|g_*(\xi,s)-g_{\varepsilon} (\xi,s) \right\|^{p}ds \right)^{\frac{1}{p}}+2r M(\varepsilon)\left[\mu(\Omega_*)\right]^{\frac{1}{p}}
\end{eqnarray}
for a.a. $\xi \in \Omega$ where $g_*(\cdot,\cdot)=K_2(\cdot,\cdot,0),$ $g_{\varepsilon}(\cdot,\cdot)$ is defined in (\ref{eq79}). Thus (\ref{eq77}), (\ref{eq78}) and (\ref{eq81}) yield
\begin{eqnarray*}
&& \int_{\Omega_*} \left\|K_2(\xi,s,z(s))\right\| \left\|u(s)-v(s)\right\| ds \leq 2r \cdot \frac{\varepsilon}{c_*\left[3\mu(\Omega)\right]^{\frac{1}{p}}}  \nonumber \\
&& \quad + 2r \left(\int_{\Omega} \left\|g_*(\xi,s)-g_{\varepsilon} (\xi,s) \right\|^{p}ds \right)^{\frac{1}{p}}+2r M(\varepsilon)\left[\mu(\Omega_*)\right]^{\frac{1}{p}} \end{eqnarray*} for a.a. $\xi \in \Omega.$ Finally, from the last inequality, (\ref{eq72}) and (\ref{eq76}) we obtain
\begin{eqnarray*}
\displaystyle && \left\|x(\xi)-z(\xi)\right\| \leq  \kappa_0 \left\|x(\xi)-z(\xi)\right\| + \lambda  \left(\int_{\Omega} \gamma_1 (\xi,s)^{q} ds \right)^{\frac{1}{q}}\left\|x(\cdot)-z(\cdot)\right\|_{p} \nonumber \\ && \quad +\lambda \kappa_2 r  \left\|x(\cdot)-z(\cdot)\right\|_{p} + 2\lambda r \cdot \frac{\varepsilon}{c_* \left[3\mu(\Omega)\right]^{\frac{1}{p}}}
\nonumber \\ && \quad + 2\lambda r \left(\int_{\Omega} \left\|g_*(\xi,s)-g_{\varepsilon} (\xi,s) \right\|^{p}ds \right)^{\frac{1}{p}}+2\lambda r M(\varepsilon)\left[\delta_*(\varepsilon)\right]^{\frac{1}{p}}
\end{eqnarray*} for a.a. $\xi \in \Omega$ and consequently
\begin{eqnarray*}
\displaystyle && \left\|x(\xi)-z(\xi)\right\|^{p} \leq 6^{p-1} \Big[ \kappa_0^{p} \left\|x(\xi)-z(\xi)\right\|^{p} + \lambda^{p}  \left(\int_{\Omega} \gamma_1 (\xi,s)^{q} ds \right)^{\frac{p}{q}}\left\|x(\cdot)-z(\cdot)\right\|_{p}^{p}
\nonumber \\ && \quad +\lambda^{p} \kappa_2^{p} r^{p}  \left\|x(\cdot)-z(\cdot)\right\|_{p}^{p} + 2^{p}\lambda^{p} r^{p} \cdot \frac{\varepsilon^{p}}{3\mu(\Omega)c_{*}^{p}}
\nonumber \\ && \quad + 2^{p}\lambda^{p} r^{p} \int_{\Omega} \left\|g_*(\xi,s)-g_{\varepsilon} (\xi,s) \right\|^{p}ds +2^{p}\lambda^{p} r^{p} M(\varepsilon)^{p}\delta_*(\varepsilon)\Big]
\end{eqnarray*} for a.a. $\xi \in \Omega.$ Integrating the last inequality on the set $\Omega$ and taking into consideration (\ref{kap1}), (\ref{ellam}), (\ref{eq79}) and the inequality
\begin{eqnarray*}
\displaystyle \delta_*(\varepsilon)<\frac{\varepsilon^{p}}{3M(\varepsilon)^{p}c_*^{p}\mu(\Omega)}
\end{eqnarray*}
we obtain
\begin{eqnarray*}
\displaystyle && \left\|x(\cdot)-z(\cdot)\right\|^{p}_{p} \leq 6^{p-1} \Big[ \kappa_0^{p} \left\|x(\cdot)-z(\cdot)\right\|^{p}_{p} + \lambda^{p} \int_{\Omega} \left(\int_{\Omega} \gamma_1 (\xi,s)^{q} ds \right)^{\frac{p}{q}} d\xi \cdot \left\|x(\cdot)-z(\cdot)\right\|_{p}^{p}
\nonumber \\ && \quad +\lambda^{p} \kappa_2^{p} r^{p} \mu(\Omega) \left\|x(\cdot)-z(\cdot)\right\|_{p}^{p} + 2^{p}\lambda^{p} r^{p} \cdot \frac{\varepsilon^{p}}{3c_{*}^{p}}
\nonumber \\ && \quad + 2^{p}\lambda^{p} r^{p} \int_{\Omega} \int_{\Omega} \left\|g_*(\xi,s)-g_{\varepsilon} (\xi,s) \right\|^{p}ds \, d\xi +2^{p}\lambda^{p} r^{p} M(\varepsilon)^{p}\mu(\Omega)\delta_*(\varepsilon)\Big] \nonumber \\
&&\leq 6^{p-1} \Big[ \kappa_0^{p} + \lambda^{p} \kappa_1^{p} +\lambda^{p} \kappa_2^{p} r^{p} \mu(\Omega)\Big] \cdot \left\|x(\cdot)-z(\cdot)\right\|_{p}^{p} \nonumber \\
&& \quad + 6^{p-1} \left[ 2^{p}\lambda^{p} r^{p} \cdot \frac{\varepsilon^{p}}{3c_{*}^{p}}
+ 2^{p}\lambda^{p} r^{p} \cdot \frac{\varepsilon^{p}}{3c_{*}^{p}}+2^{p}\lambda^{p} r^{p} M(\varepsilon)^{p}\mu(\Omega)\cdot \frac{\varepsilon^{p}}{3M(\varepsilon)^{p}c_*^{p}\mu(\Omega)}\right] \nonumber\\
&& = L_*(\lambda)\cdot \left\|x(\cdot)-y(\cdot)\right\|_{p}^{p} + 6^{p-1}\cdot \left[2\lambda r\right]^{p}\cdot \Bigg[ \frac{\varepsilon^{p}}{3c_{*}^{p}} +\frac{\varepsilon^{p}}{3c_{*}^{p}} +\frac{\varepsilon^{p}}{3c_{*}^{p}}\Bigg] \nonumber \\
&&= L_*(\lambda)\cdot \left\|x(\cdot)-y(\cdot)\right\|_{p}^{p} + 6^{p-1}\cdot \left[2\lambda r\right]^{p}\cdot \frac{\varepsilon^{p}}{c_{*}^{p}}
\end{eqnarray*} and finally, by virtue of (\ref{c*})
\begin{eqnarray*}
\displaystyle \left\|x(\cdot)-z(\cdot)\right\|_{p} \leq 2\lambda r\left[\frac{6^{p-1}}{1-L_*(\lambda)}\right]^{\frac{1}{p}}
\cdot \frac{\varepsilon}{c_{*}}=\varepsilon.
\end{eqnarray*}
The proof is completed.
\end{proof}

Denote
\begin{eqnarray*}
U_{q,r}^{*}=\left\{u(\cdot) \in L_q\big(\Omega;\mathbb{R}^m\big):
\left\| u(\cdot) \right\|_{q} = r \right\} ,
\end{eqnarray*}
and let $\mathbf{X}_{p,r}^{*}$ be the set of trajectories of the system (\ref{ue1})
generated by all admissible control functions $u(\cdot)\in U_{q,r}^{*}.$

\begin{theorem}\label{teo4.2} The equality
\begin{eqnarray*}
\mathbf{X}_{p,r}= cl \, \left(\mathbf{X}_{p,r}^{*}\right)
\end{eqnarray*} is satisfied where $cl$ denotes the closure of a set.
\end{theorem}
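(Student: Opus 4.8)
The plan is to prove the two inclusions $cl\,(\mathbf{X}_{p,r}^{*}) \subseteq \mathbf{X}_{p,r}$ and $\mathbf{X}_{p,r} \subseteq cl\,(\mathbf{X}_{p,r}^{*})$ separately. The first one is immediate: since $U_{q,r}^{*} \subset U_{q,r}$, every trajectory generated by a control of norm exactly $r$ is in particular a trajectory generated by an admissible control, so $\mathbf{X}_{p,r}^{*} \subset \mathbf{X}_{p,r}$; by Proposition \ref{prop2.3} the set $\mathbf{X}_{p,r}$ is compact, hence closed in $L_{p}(\Omega;\mathbb{R}^{n})$, and therefore it contains the closure of $\mathbf{X}_{p,r}^{*}$.

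For the reverse inclusion, fix $x(\cdot) \in \mathbf{X}_{p,r}$ generated by some $u(\cdot) \in U_{q,r}$, and put $r_{0} = \|u(\cdot)\|_{q} \leq r$. If $r_{0} = r$, then $x(\cdot) \in \mathbf{X}_{p,r}^{*}$ and there is nothing to do, so assume $r_{0} < r$. Let $\varepsilon > 0$ be arbitrary. Using that the Lebesgue measure on $\Omega$ is non-atomic, choose a measurable set $\Omega_{*} \subset \Omega$ with $0 < \mu(\Omega_{*}) \leq \delta_{*}(\varepsilon)$, where $\delta_{*}(\varepsilon)$ is as in Proposition \ref{prop4.1}. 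The key point — and this is precisely where the integral, rather than geometric, nature of the constraint enters — is that the remaining resource can be consumed entirely on the small set $\Omega_{*}$ with no pointwise obstruction. Concretely, fix a unit vector $e \in \mathbb{R}^{m}$ and set $u_{*}(s) \equiv c\,e$ on $\Omega_{*}$ with
\[
c = \left( \frac{ r^{q} - \int_{\Omega \setminus \Omega_{*}} \|u(s)\|^{q}\, ds }{ \mu(\Omega_{*}) } \right)^{\frac{1}{q}} .
\]
Since $\int_{\Omega \setminus \Omega_{*}} \|u(s)\|^{q}\, ds \leq r_{0}^{q} < r^{q}$, this $c$ is well defined and positive. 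Defining $v(\cdot)$ by (\ref{eq71}) with this $u_{*}(\cdot)$ one gets $\|v(\cdot)\|_{q}^{q} = \int_{\Omega \setminus \Omega_{*}} \|u(s)\|^{q}\, ds + c^{q}\mu(\Omega_{*}) = r^{q}$, so $v(\cdot) \in U_{q,r}^{*}$.

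Let $z(\cdot)$ be the trajectory generated by $v(\cdot)$, which exists and is unique by Proposition \ref{prop2.1}; by definition $z(\cdot) \in \mathbf{X}_{p,r}^{*}$. Since $\mu(\Omega_{*}) \leq \delta_{*}(\varepsilon)$, condition (\ref{eq72}) of Theorem \ref{teo4.1} is satisfied, and that theorem gives $\|x(\cdot) - z(\cdot)\|_{p} \leq \varepsilon$. As $\varepsilon > 0$ was arbitrary, $x(\cdot) \in cl\,(\mathbf{X}_{p,r}^{*})$, which establishes $\mathbf{X}_{p,r} \subseteq cl\,(\mathbf{X}_{p,r}^{*})$. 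Together with the first inclusion this yields the asserted equality.

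I expect no serious obstacle: all of the analytic content has already been packaged into Theorem \ref{teo4.1} and Proposition \ref{prop4.1}. The only points that require a little care are the choice of $\Omega_{*}$ with positive measure — which rests on the (harmless) standing assumption $\mu(\Omega) > 0$, already implicit in the definitions of $c_{*}$ and $\delta_{*}(\varepsilon)$ — and the verification that an admissible control can always be "topped up" to norm exactly $r$ on a set of arbitrarily small measure, which is exactly the feature distinguishing integral from geometric constraints.
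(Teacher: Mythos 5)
Your proof is correct and follows essentially the same route as the paper: both establish $cl\,(\mathbf{X}_{p,r}^{*})\subset \mathbf{X}_{p,r}$ from the closedness of $\mathbf{X}_{p,r}$ (Proposition \ref{prop2.3}) and prove the reverse inclusion by topping up the control to norm exactly $r$ on a set $\Omega_{*}$ of measure at most $\delta_{*}(\varepsilon)$ and invoking Theorem \ref{teo4.1}. Your version is in fact slightly cleaner, since you write the exponents in the top-up constant consistently with the $L_{q}$ norm (the paper's formula mixes $p$ and $q$ there, apparently a typo) and you explicitly handle the trivial case $r_{0}=r$.
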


\begin{proof}
Let us choose an arbitrary $x(\cdot)\in \mathbf{X}_{p,r}$ generated by the control function $u(\cdot)\in U_{p,r}$ and let
$\left\|u(\cdot)\right\|_p=r_0<r$. For given $\varepsilon >0$ choose Lebesgue measurable set $\Omega_* \subset \Omega$ where $\mu(\Omega_*)\leq \delta_*(\varepsilon)$ and $\delta_*(\varepsilon)>0$ is defined as in Theorem \ref{teo4.1}. Assume that
\begin{eqnarray*}
\int_{\Omega \setminus \Omega_*}\left\|u(s)\right\|^p ds =r_1^p \, .
\end{eqnarray*} It is obvious that $r_1\leq r_0.$ Define control function
\begin{eqnarray*}
u_*(s)=\left\{
\begin{array}{llll}
u(s) \ , & \mbox{if} & s\in \Omega \setminus \Omega_* \ , \\
\displaystyle \left[\frac{r^p -r_1^p}{\mu(\Omega_*)} \right]^{\frac{1}{q}} \cdot b_* \ , & \mbox{if} & s\in \Omega_*
\end{array}
\right.
\end{eqnarray*} where $b_*\in \mathbb{R}^m$ is an arbitrary vector such that $\left\| b_* \right\|=1.$ One can show that $u_*(\cdot)\in U_{p,r}^{*}.$ Let $x_*(\cdot):\Omega \rightarrow \mathbb{R}^m$ be the trajectory of the system (\ref{ue1}) generated by the control function $u_*(\cdot)\in U_{p,r}^{*}.$ Then $x_*(\cdot)\in \mathbf{X}_{p,r}^{*}$ and according to the theorem \ref{teo4.1} we have
\begin{eqnarray*}
\left\|x(\cdot)-x_*(\cdot)\right\|_{p} \leq \varepsilon
\end{eqnarray*} which implies that
\begin{eqnarray}\label{inc1}
x_*(\cdot) \in \mathbf{X}_{p,r}^{*} +\varepsilon B_{L_{p}}(1)
\end{eqnarray} where
\begin{eqnarray*}
B_{L_{p}}(1)=\left\{y(\cdot)\in L_{p}\left( \Omega; \mathbb{R}^m \right): \left\|y(\cdot)\right\|_{p} \leq 1\right\}.
\end{eqnarray*}

Since $\varepsilon >0$ is an arbitrarily chosen number, then (\ref{inc1}) yields that
\begin{eqnarray*}
x_*(\cdot) \in cl \, \left(\mathbf{X}_{p,r}^{*}\right)
\end{eqnarray*} and hence
\begin{eqnarray}\label{inc2}
 \mathbf{X}_{p,r} \subset cl \, \left(\mathbf{X}_{p,r}^{*}\right).
\end{eqnarray}

Since $cl \, \left(\mathbf{X}_{p,r}^{*}\right)  \subset \mathbf{X}_{p,r},$ then the inclusion (\ref{inc2}) completes the proof of the theorem.
\end{proof}

\begin{corollary} The equality
\begin{eqnarray*}
h_{L_{p}} \left(\mathbf{X}_{p,r}, \mathbf{X}_{p,r}^{*}\right)=0
\end{eqnarray*} is held where $h_{L_{p}}(\cdot,\cdot)$ stands for Hausdorff distance between the subsets of the space $L_{p}\left( \Omega;\mathbb{R}^n\right).$
\end{corollary}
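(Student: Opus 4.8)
The plan is to deduce the Corollary directly from Theorem \ref{teo4.2} together with the elementary structure of the Hausdorff distance. Recall that for nonempty bounded sets $A,B$ in a metric space one has
\[
h_{L_p}(A,B)=\max\left\{\ \sup_{a\in A}\ \mathrm{dist}(a,B),\ \ \sup_{b\in B}\ \mathrm{dist}(b,A)\ \right\},
\]
where $\mathrm{dist}(x,C)=\inf_{c\in C}\left\|x-c\right\|_p$. Both $\mathbf{X}_{p,r}$ and $\mathbf{X}_{p,r}^{*}$ are nonempty — the latter because $U_{q,r}^{*}$ plainly contains functions of $L_q$-norm exactly $r$ (e.g. a suitable constant), and Proposition \ref{prop2.1} then yields a generated trajectory — and both are bounded in $L_p(\Omega;\mathbb{R}^n)$ by Proposition \ref{prop2.2}, so the right-hand side is a well-defined finite number.

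First I would observe that $U_{q,r}^{*}\subset U_{q,r}$, whence $\mathbf{X}_{p,r}^{*}\subset \mathbf{X}_{p,r}$ straight from the definition of the trajectory sets. Consequently $\mathrm{dist}(b,\mathbf{X}_{p,r})=0$ for every $b(\cdot)\in \mathbf{X}_{p,r}^{*}$, and therefore $\sup_{b\in \mathbf{X}_{p,r}^{*}}\mathrm{dist}(b,\mathbf{X}_{p,r})=0$.

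For the remaining one-sided excess, fix $x(\cdot)\in \mathbf{X}_{p,r}$. By Theorem \ref{teo4.2} we have $\mathbf{X}_{p,r}=cl\,\left(\mathbf{X}_{p,r}^{*}\right)$, so $x(\cdot)$ lies in the closure of $\mathbf{X}_{p,r}^{*}$; hence for each $\varepsilon>0$ there exists $x_*(\cdot)\in \mathbf{X}_{p,r}^{*}$ with $\left\|x(\cdot)-x_*(\cdot)\right\|_p\leq \varepsilon$, which gives $\mathrm{dist}(x,\mathbf{X}_{p,r}^{*})\leq \varepsilon$. Since $\varepsilon>0$ is arbitrary, $\mathrm{dist}(x,\mathbf{X}_{p,r}^{*})=0$, and as $x(\cdot)\in \mathbf{X}_{p,r}$ was arbitrary, $\sup_{x\in \mathbf{X}_{p,r}}\mathrm{dist}(x,\mathbf{X}_{p,r}^{*})=0$. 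Combining the two suprema, both terms in the formula for $h_{L_p}$ vanish, so $h_{L_p}\left(\mathbf{X}_{p,r},\mathbf{X}_{p,r}^{*}\right)=0$.

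I expect no real obstacle: all the substance has already been absorbed into Theorem \ref{teo4.2} (which itself rests on Theorem \ref{teo4.1} and the compactness assertion of Proposition \ref{prop2.3}). The only points worth recording explicitly are the non-emptiness and the boundedness of the two sets, so that the Hausdorff distance is meaningful, together with the trivial inclusion $\mathbf{X}_{p,r}^{*}\subset \mathbf{X}_{p,r}$; the rest is a one-line unwinding of the definition of $h_{L_p}$.
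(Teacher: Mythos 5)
Your argument is correct and is exactly the intended one: the paper states this corollary without proof as an immediate consequence of Theorem \ref{teo4.2}, and your unwinding of the Hausdorff distance (the inclusion $\mathbf{X}_{p,r}^{*}\subset \mathbf{X}_{p,r}$ killing one excess, the density from Theorem \ref{teo4.2} killing the other) is the standard way to make that explicit. The remarks on non-emptiness and boundedness via Propositions \ref{prop2.1} and \ref{prop2.2} are a reasonable bit of extra care but not needed beyond what the paper already establishes.
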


% ------------------------------------------------------------------------
\end{document}